\documentclass[letterpaper, 10 pt, conference]{ieeeconf} 
\usepackage{amssymb,latexsym,amsmath}
\usepackage{epsfig}
\usepackage{caption}
\usepackage{caption}
 
 \usepackage{epsfig}

\usepackage{makeidx}

\allowdisplaybreaks

 \usepackage{graphicx,fancybox,latexsym,epsfig}
\usepackage{fancyhdr,amsmath,times,amsxtra,amssymb}
\usepackage{color}

\def\diam{\mathop{\rm diam}}
\def\Lip{\mathop{\rm Lip}}

\def\P{{\mathcal P}}

\newcommand{\cF}{\mathcal{F}}
\newcommand{\cN}{\mathcal{N}}

\newcommand{\Pp}{\mathcal{P}}
\newcommand{\Ll}{\mathcal{L}}

\usepackage{amsmath}
\usepackage{amsfonts}
\usepackage{array}
\usepackage{dsfont}
\usepackage{hyperref}
\usepackage{amssymb}

\usepackage{amsthm}
\usepackage{bbold}
\usepackage{caption}
\usepackage{standalone}
\usepackage{accents}
\let\labelindent\relax
\usepackage{enumitem}
\usepackage{tikz}
\usepackage{pgfplots}
\pgfplotsset{compat=1.6}
\usepgfplotslibrary{groupplots}
\usepackage{cancel}
\allowdisplaybreaks

\newtheorem{lemma}{Lemma}
\newtheorem{theorem}{Theorem}

\newtheorem{assumption}{Assumption}[section]
       \newtheorem{cor}{\bf{Corollary}}[section]
  \newtheorem{remark}{Remark}[section]

\usepackage{amssymb,latexsym,amsmath}
\usepackage{mathrsfs}
\usepackage{epsfig}
\usepackage{caption}

\pgfplotsset{soldot/.style={color=blue,only marks,mark=*}}
\pgfplotsset{holdot/.style={color=blue,fill=white,only marks,mark=*}}
\fussy

\IEEEoverridecommandlockouts

\renewcommand{\baselinestretch}{0.94}

\usepackage[utf8]{inputenc}

\begin{document}

\sloppy
\title{Mean-Field Control with a Common Hidden State under Decentralized Observations\thanks{
E. Bayraktar is partially supported by the National Science Foundation under grant DMS-2507940 and by the Susan M. Smith chair.}
}
\author{Erhan Bayraktar\thanks{E. Bayraktar is with the Department of Mathematics,
     University of Michigan, Ann Arbor, MI, USA, email:erhan@umich.edu}
     and Ali Devran Kara\thanks{A. D. Kara is with the Department of Mathematics,
     Florida State University, Tallahassee, FL, USA, email:akara@fsu.edu}}
\maketitle
\begin{abstract}
We study optimal control of a system with multiple decision makers who share a common hidden state and receive fully decentralized observations through identical channels. The
dynamics of the hidden state and the cost incurred by the agents depend on the agents' actions only through their empirical distribution. In the limit problem with infinitely many agents, the problem reduces to a single agent control problem where the agent affects the hidden state dynamics via the conditional law of the actions
given the past values of the hidden state process. We formulate this problem as a deterministic measure valued control problem over the space of policies and provide a dynamic programming recursion.

We first show that for the limiting problem randomization over the control actions is necessary for optimality.  However, randomization over the selection of policies (i.e., mixture policies) is not required. We then show that the optimal symmetric policies designed for the infinite population problem are near optimal for the finite population problem. 

In particular,  we establish convergence rates that decay with number of agents as $\frac{1}{\sqrt{N}}$, and grow exponentially with the memory length used in the policy. \end{abstract}

\section{Introduction}

Decision making for multi-agent control problems under decentralized information structures arises a broad class of  problems with fundamental practical importance. Their applications include networked control problems, energy grids, communication networks, distributed sensor models, traffic networks etc. \cite{sandell1978survey, couillet2012electrical, bagagiolo2014mean, olfatisaber2004consensus, ho1980team, mahajan2012information, arrow1979allocation, hespanha2007survey, davison1973optimal, tsitsiklis1988decentralized} Despite their powerful modeling capacity, their mathematical analysis remains very challenging. 

The goal of this paper is to develop a general framework for multi-agent mean-field team control problems where multiple agents aim to cooperatively  control a common hidden state process under local partial observations of the  state. The agents influence the state dynamics only through the empirical distribution of their joint actions, a structure that naturally arises in large population settings and allows tractable analysis as the number of agents grows.

We now present the model precisely. Suppose $N$ agents (decision-makers or controllers) act in a cooperative way to minimize a cost function, and the agents share a common state and an action space denoted by $\mathds{X}\subset \mathds{R}^m$ and $\mathds{U}\subset{\mathds{R}}$ for some $m<\infty$, for any time step $t$,  we have
\begin{align}\label{dynamics}
x_{t+1}=f(x_t,\mu_{\bf u_t},w_t)
\end{align}
for a measurable function $f$, where $\{w_t\}$ denotes an i.i.d. noise process where 
\begin{align*}
\mu_{\bf u_t}:=\frac{1}{N}\sum_{i=1}^N\delta_{u_t^i}(\cdot)
\end{align*}
denotes the empirical distribution of the control actions ${\bf u_t}=[u^0_t,\dots,u_t^N]$ of the agents.

We assume that the agents do not have access to state $x_t$ and observe the state via
\begin{align*}
y_t^i=g(x_t,v_t^i)
\end{align*}
where $g$ is some measurable function, and $v_t^i$ is some i.i.d. observation noise process. We can equivalently represent the observations with a kernel 
\begin{align*}
O(dy^i_t|x_t).
\end{align*}
At every time stage $t$, each agent receives a cost determined by a measurable stage-wise cost function $c:\mathds{X}\times \mathcal{P}_N(\mathds{U})\to \mathds{R}$, where $\mathcal{P}_N(\mathds{U})$ is the set of all empirical measures on $\mathds{U}$ constructed using $N$ dimensional action vectors.  That is, if the state and the empirical action distribution of the agents are given by $x_t,\mu_{\bf u_t}$, the agents receive
\begin{align*}
c(x_t,\mu_{\bf u_t}).
\end{align*}

We assume that the agents are decentralized and that they only have access to their own observations and past control actions.
 We define an admissible policy for an agent $i$, as a sequence of functions $\gamma^i:=\{\gamma^i_t\}_t$, where $\gamma^i_t$ is a $\mathds{U}$-valued (possibly randomized) function which is measurable with respect to the $\sigma$-algebra generated by 
\begin{align}\label{info}
I^i_t=(y^i_t,\dots,y^i_0,u^i_{t-1},\dots,u^i_0).
\end{align}
Accordingly, an admissible {\it team} policy, is defined as $\gamma:=\{\gamma^1,\dots,\gamma^N\}$, where $\gamma^i$ is an admissible policy for the agent $i$. 

The objective of the agents is to minimize the following cost function
\begin{align*}
J^N(P_0,\gamma)=\sum_{t=0}^{T-1}E_\gamma\left[c(X_t,\mu_{\bf U_t})\right]
\end{align*}
for some finite time horizon $T<\infty$ where $X_0\sim P_0\in\P(\mathds{X})$.

The optimal cost is defined by
\begin{align}\label{opt_cost}
J^{N,*}(P_0):=\inf_{\gamma\in\Gamma}J^N(P_0,\gamma)
\end{align}
where $\Gamma$ denotes the set of all admissible decentralized team policies.

Under the decentralized information structure, the optimality formulation is known to be challenging, and in particular, the symmetric policies are  suboptimal for the finite population problem.

\subsection{Related Literature.}
The contributions of the paper cover two main research areas: (i) decentralized multi-agent control under partial information structures, including existence and structural results for optimal policies, and (ii) mean-field approximations for multi-agent decision making problems exploiting symmetry among agents.

\noindent{\bf Structural results for decentralized control.}
For decentralized control problems with finitely many agents under partial information structures finding optimal team policies is particularly challenging due to information decentralization and partial observation.   \cite{yuksel2020universal, yuksel2017convex, saldi2020topology, gupta2015existence} \cite{YukselBasarBook,yuksel2024stochastic} approach the problem by developing dynamic programming methods on the space of strategic measures and establishing existence of optimal team policies. To establish existence, the authors construct appropriate topologies on the space of policies and strategic measures, and exploit the resulting convexity and compactness properties.

\noindent{\bf Mean field approximations.}
To obtain a computationally and analytically tractable model for multi-agent control problems, a common approach is to impose symmetry assumptions. The mean-field team framework, where agents are related only through the empirical distribution of their joint state, is one such case. For cooperative team problems, where agents jointly minimize a common cost function, the main focus has been on fully observed settings where agents can observe their local states and the mean-field of the agents' states. The standard method is to study the infinite population limit, where the problem reduces to a measure-valued control problem with the mean-field distribution as the state. This setting has been extensively studied in continuous time; see \cite{bayraktar2018randomized,djete2022mckean, lauriere2014dynamic,pham2017dynamic,carmona2021convergence,germain2022numerical,bayraktar2021mean,bayraktar2021solvability, sanjari2022optimality, sanjari2021optimal,lacker2017limit,fornasier2019mean,djete2022mckean1}.

This setting has also been studied in discrete-time settings, see \cite{motte2022quantitative,motte2022mean,bayraktar2025finite, bayraktar2024infinite,bayraktar2025learning, sanjari2022optimality, sanjari2021optimal,gu2021mean,gu2019dynamic,bauerle2021mean,angiuli2022unified} and references therein for the study of dynamic programming principle, learning methods, and justification of the exchangeability of agents for large (possibly infinite) agent team settings.

Among these, \cite{sanjari2022optimality} is the only work that considers a partial observation setting. There, each agent has a local hidden state observed through a noisy channel, and the local states are coupled through the empirical average of the agents' states and actions, evolving under idiosyncratic noise without common randomness. Notably, since all noise processes in this model are idiosyncratic, in the infinite population limit, the mean-field dynamics become deterministic. The authors establish that symmetric policies are approximately optimal as the number of agents grows. Our setting differs in several key respects. First, the agents share a single common hidden state process driven by common randomness, which each agent observes through its own noisy channel. Second, the agents influence the state dynamics through the full empirical distribution of their actions, not just through its mean.  
As a result, the stochasticity in the hidden state evolution remain in the infinite population limit as well, which affects  the nature of the limiting problem significantly and requires  a different analytical approach.

\subsection{Contributions}
\begin{itemize}
\item In Section \ref{limit_problem_2}, we formulate a new control problem that correspond to the infinite population problem, where the control actions are the conditional law of $u_t$ given the past values of the hidden state and the past control functions. 
\item In Section \ref{pure_policy_section}, we show that the optimal performance can be achieved without a mixture structure on the policies, and the agents only need to randomize their actions independently.
\item In Section \ref{dpp_section}, we reformulate the infinite population problem as a measure valued problem, and provide a dynamic programming principle on the space of randomized policies.
\item Finally, in Section \ref{conv_section}, we show that the value function of the finite population problem converges to the value function of the infinite population problem. Furthermore, we show that the symmetric policies that are optimal for the infinite population problem are near optimal when used in the finite population for large $N$. In both cases, we give convergence rates. 
\end{itemize}

In what follows, we will  formulate an alternative control problem that corresponds to the control of the presented system when the number of agents tends to infinity. 
\section{Mean-field Limit Problem}\label{limit_problem_2}
We define 
\begin{align*}
\Gamma_t=\{\text{all stochastic kernels from } \mathds{Y}^{t}\times\mathds{U}^{t-1} \text{ to } \mathds{U}\}. 
\end{align*}
Let $\gamma=(\gamma_t)_{t=0}^{T-1}$ be a sequence of randomized control functions with $\gamma_t\in\Gamma_t$ such that for each $t$
\begin{align*}
\gamma_t: \mathds{Y}^t\times \mathds{U}^{t-1}\to \P(\mathds{U}).
\end{align*}
For the given $\gamma=(\gamma_t)_t$, we define 
\begin{align*}
\mathcal{L}^\gamma(u_t|x_t,\dots,x_0) \in \P(\mathds{U})
\end{align*}
to be the conditional law of $U_t$ given the past hidden states $x_t,\dots,x_0$. In particular, for some test function $h$, and for a given hidden state path $x_t,\dots, x_0$ we have 
\begin{align*}
&\int_{\mathds{U}}  h(u_t) \mathcal{L}^\gamma(u_t \mid x_{[0,t]})(du_t) \\
&=\int h(u_t) \prod_{k=0}^t\gamma_k(du_k|h_k) O(dy_k|x_k)
\end{align*} 
where $h_k=(y_k,\dots,y_0,u_{k-1},\dots,u_0)$ with $h_0=y_0$.

For a given sequence of control functions, the dynamics of hidden state is given by
\begin{align*}
x_{t+1}=f(x_t, \mathcal{L}^\gamma(u_t \mid x_{[0,t]}),w_t )
\end{align*}
which can be equivalently represented by a stochastic kernel 
\begin{align*}
\mathcal{T}(dx_{t+1}|x_t, \mathcal{L}^\gamma(u_t \mid x_{[0,t]})). 
\end{align*}
We say that $\gamma^n\in \prod_{t=0}^{T-1}\Gamma_t$ converges to $\gamma \in \prod_{t=0}^{T-1}\Gamma_t$ if 
\begin{align*}
&P^n(dx_{[0,t]}, dy_{[0,t]},du_{[0,t]})\\
& = \gamma_t^n(du_t|h_t)O(dy_t|x_t)\mathcal{T}(dx_t|x_{t-1},\mathcal{L}^{\gamma^n}(u_t|x_{[0,t]}))\\
&\qquad\dots  \gamma^n_0(du_0|y_0)O(dy_0|x_0) P_0(dx_0).
\end{align*}
converges weakly to 
\begin{align*}
&P(dx_{[0,t]}, dy_{[0,t]},du_{[0,t]})\\
 &= \gamma_t(du_t|h_t)O(dy_t|x_t)\mathcal{T}(dx_t|x_{t-1},\mathcal{L}^{\gamma}(u_t|x_{[0,t]}))\\
&\qquad \dots  \gamma_0^n(du_0|y_0)O(dy_0|x_0) P_0(dx_0).
\end{align*}
for all $t=0,\dots,T-1$.

Note that it is not directly obvious that the set of all policies $\prod_{t=0}^{T-1}\Gamma_t$ is sufficient for optimality, and one may need to consider randomizations over this set via mixture policies for optimality.

Hence, we define the decision variable for the infinite population problem to be the set of all mixture policies, alternatively, we define the decision variable to a probability measure
\begin{align*}
\Pi \in \P\left(\prod_{t=0}^{T-1}\Gamma_t\right).
\end{align*}
Under a mixture policy, i.e. under some $\Pi$, we write  $\mathcal{L}^\Pi(u_t \mid x_{[0,t]})(du_t)$ to denote the law of $u_t$ given the hidden state history $x_{[0,t]}$. In particular, for the given $\Pi$, and for some test function $h$ we have 
\begin{align*}
&\int_{\mathds{U}}  h(u_t) \mathcal{L}^\Pi(u_t \mid x_{[0,t]})(du_t) \\
&=\int h(u_t) \prod_{k=0}^t\gamma_k(du_k|h_k) O(dy_k|x_k)\Pi(d\gamma).
\end{align*} 
The dynamics are then written as 
\begin{align*}
x_{t+1}=f(x_t, \mathcal{L}^\Pi(u_t \mid x_{[0,t]}),w_t ),
\end{align*}
the stage-wise cost function similarly is given by
\begin{align*}
c(x_t,\mathcal{L}^\Pi(u_t \mid x_{[0,t]})).
\end{align*}

The objective is to minimize the following cost function
\begin{align*}
J^\infty(P_0,\Pi)=\sum_{t=0}^{T-1}E_\Pi\left[c(X_t, \mathcal{L}^\Pi(U_t \mid X_{[0,t]}))\right]
\end{align*}
for some finite time horizon $T<\infty$ where $X_0\sim P_0\in\P(\mathds{X})$.

The optimal cost is defined by
\begin{align}\label{opt_cost_inf}
J^{\infty,*}(P_0):=\inf_{\Pi}J^\infty(P_0,\Pi).
\end{align}

\subsection{Optimality of Pure Policies}\label{pure_policy_section}
We now show that the mixture policies can be replicated using sequence of policies without randomization over the policy spaces.
Under the mixture policy $\Pi$, the joint distribution of actions given observations is:
\[
    \nu(d u_{[0,t]} \mid y_{[0,t]})
    = \int \prod_{k=0}^{t} \gamma_k(d u_k \mid y_{[0,k]}, u_{[0,k-1]}) \; \Pi(d \boldsymbol{\gamma}).
\]

\begin{lemma}[Replication of Mixture Policies]
\label{thm:main}
Let $\Pi \in \Pp\left(\prod_{t=0}^{T-1}\Gamma_t\right)$ be a mixture policy. Then there exists a sequence of policies 
\[
    \bar{\boldsymbol{\gamma}} = (\bar{\gamma}_0, \bar{\gamma}_1, \ldots, \bar{\gamma}_{T-1}) \in \prod_{t=0}^{T-1}\Gamma_t
\]
such that for every state sequence $x_{[0,t]}$:
\[
    \Ll^{\Pi}(u_t \mid x_{[0,t]})
    = \Ll^{\bar{\boldsymbol{\gamma}}}(u_t \mid x_{[0,t]}).
\]
That is, the conditional law of the action given the states under the mixture policy equals the conditional law under the policy sequence $\bar{\boldsymbol{\gamma}}$.
\end{lemma}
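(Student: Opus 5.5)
Since the observation kernel $O(dy_k|x_k)$ does not depend on the policy, the plan is to replicate the mixture's conditional action law given observations, and then integrate against the observation kernels. Concretely, $\nu(du_{[0,t]}\mid y_{[0,t]})$ is a consistent family of kernels: integrating out $u_t$ from $\nu(du_{[0,t]}\mid y_{[0,t]})$ gives $\nu(du_{[0,t-1]}\mid y_{[0,t-1]})$. This holds because each $\gamma_t(du_t|h_t)$ is a probability measure. Consistency also shows the marginal in $u_{[0,t-1]}$ does not depend on $y_t$, which is exactly the non-anticipativity we need.

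I will define $\bar\gamma_t$ by disintegration. Because $\mathds{U}$ is a Borel subset of $\mathds{R}$ (standard Borel), there is a stochastic kernel
\[
\bar\gamma_t(du_t\mid y_{[0,t]},u_{[0,t-1]})
\]
with
\[
\nu(du_{[0,t]}\mid y_{[0,t]})=\bar\gamma_t(du_t\mid y_{[0,t]},u_{[0,t-1]})\,\nu(du_{[0,t-1]}\mid y_{[0,t]}) .
\]
By consistency, the last factor equals $\nu(du_{[0,t-1]}\mid y_{[0,t-1]})$. Heuristically, $\bar\gamma_t$ is the posterior mixture $\int\gamma_t(du_t|h_t)\,\Pi(d\gamma\mid h_t)$, where $\Pi(d\gamma\mid h_t)\propto\prod_{k<t}\gamma_k(du_k|h_k)\,\Pi(d\gamma)$. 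Then $\bar\gamma_t\in\Gamma_t$. Iterating the disintegration from $t$ down to $0$ gives
\[
\prod_{k=0}^t\bar\gamma_k(du_k|h_k)=\nu(du_{[0,t]}\mid y_{[0,t]})
\]
for every $y_{[0,t]}$.

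Multiplying both sides by $\prod_{k=0}^t O(dy_k|x_k)$, integrating against a test function $h(u_t)$, and using Fubini to swap the $\Pi(d\gamma)$ integral with the observation integrals, I get
\[
\int h\,d\Ll^{\bar{\boldsymbol\gamma}}(\cdot\mid x_{[0,t]})=\int h\,d\Ll^{\Pi}(\cdot\mid x_{[0,t]})
\]
for every $x_{[0,t]}$, which is the claim.

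The main obstacle is measurability and the a.e.\ issue. A disintegration of $\nu(\cdot\mid y_{[0,t]})$ is only unique $\nu(du_{[0,t-1]}\mid y_{[0,t-1]})$-a.e., and it must be chosen jointly measurable in $(y_{[0,t]},u_{[0,t-1]})$. To handle this, I will disintegrate the single measure $\nu(du_{[0,t]}\mid y_{[0,t]})\,\lambda(dy_{[0,t]})$ for a reference measure $\lambda$; a natural choice is the $y$-marginal under $P_0$ and the kernels $O,\mathcal T$, or a dominating measure. The existence of a measurable conditional kernel on standard Borel spaces then gives $\bar\gamma_t$ jointly measurable. The identity only needs to hold up to null sets of the reference measure, but the statement is "for every $x_{[0,t]}$", and the $y$'s enter only through $O(\cdot|x_k)$. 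So I will need either to take $\lambda$ dominating all $O(\cdot|x)$, or to argue directly with the explicit posterior-mixture formula, which is defined everywhere. I also need to check that $\Pi(d\gamma)$ integrates measurably, which requires measurability of $\gamma\mapsto\gamma_k(du_k|h_k)$. I would first try the explicit ratio formula with a fixed version of the Radon–Nikodym derivative, and set $\bar\gamma_t$ to an arbitrary fixed measure off the support.
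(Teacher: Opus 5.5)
Your proposal is correct and takes essentially the same route as the paper. Both arguments integrate out the later actions to get causality and consistency of $\nu(du_{[0,t]}\mid y_{[0,t]})$, define $\bar\gamma_t$ by successive disintegration so that $\nu(du_{[0,t]}\mid y_{[0,t]})=\prod_{k}\bar\gamma_k(du_k\mid h_k)$, and then integrate against $\prod_k O(dy_k\mid x_k)$. The joint-measurability issue you raise is one the paper passes over. A kernel-disintegration theorem settles it: it supplies a version of the Radon--Nikodym derivative that is jointly measurable in $(y_{[0,t]},u_{[0,t-1]})$, which exists because $\mathds{U}$ is standard Borel. This gives the factorization for every $y_{[0,t]}$, and hence the identity for every $x_{[0,t]}$, with no dominating reference measure needed.
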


\begin{proof}

We only prove the result for two time steps, the rest follows identically by inductive arguments.

We can integrate out $(u_1, \ldots, u_t)$ from the joint distribution of the actions $u_0,\dots,u_t$ under the mixture policy such that:
\begin{align*}
   & \nu(d u_0 \mid y_{[0,t]})\\
    &= \int_{\Pi} \gamma_0(d u_0 \mid y_0)\underbrace{\left[\int \prod_{k=0}^{t} \gamma_k(d u_k \mid y_{[0,k]}, u_{[0,k-1]})\right]}_{= \, 1} \Pi(d \boldsymbol{\gamma}).
\end{align*}
Each $\gamma_k$ is a probability kernel, so the bracketed integral equals 1. Therefore,
\[
    \nu(d u_0 \mid y_{[0,t]})
    = \int_{\Pi} \gamma_0(d u_0 \mid y_0) \; \Pi(d \boldsymbol{\gamma}).
\]
The right-hand side depends only on $y_0$, not on $y_1, \ldots, y_t$. So, we define:
\[
    \bar{\gamma}_0(d u_0 \mid y_0)
    \;:=\; \int_{\Pi} \gamma_0(d u_0 \mid y_0) \; \Pi(d \boldsymbol{\gamma}).
\]
\medskip
We now construct $\bar{\gamma}_1$.
For each fixed $y_{[0,t]}$, the mixture joint $\nu(d u_{[0,1]} \mid y_{[0,t]})$ is a probability measure on $u_0 \times u_1$, and its marginal on $u_0$ by construction is $\bar{\gamma}_0(d u_0 \mid y_0)$. Furthermore, using the causality of the policies, it clearly is independent of $y_{[2,t]}$, that is  $\nu(d u_{[0,1]} \mid y_{[0,t]})=\nu(d u_{[0,1]} \mid y_{[0,1]})$. Since the spaces are Polish by assumption, we can apply the disintegration theorem: there exists a stochastic kernel $\nu(d u_1 \mid u_0, y_{[0,1]})$
such that
\[
    \nu(d u_{[0,1]} \mid y_{[0,1]})
    = \nu(d u_1 \mid u_0, y_{[0,1]}) \; \bar{\gamma}_0(d u_0 \mid y_0).
\]
Thus, we can define $ \bar{\gamma}_1(d u_1 \mid y_0, y_1, u_0)$ to be this kernel.

By repeating the same argument, we can find $(\bar{\gamma}_k)_{k=0}^t$ such that
\begin{align}\label{nu_factor}
    \nu(d u_{[0,t]} \mid y_{[0,t]})
    = \prod_{k=0}^{t} \bar{\gamma}_k(d u_k \mid y_{[0,k]}, u_{[0,k-1]}).
\end{align}
Under the mixture policy $\Pi$, the conditional law of the actions given the states can be written by integrating the joint distribution on actions against the observation channels:
\[
    \Ll^\Pi(u_{[0,t]} \mid x_{[0,t]})
    = \int \nu(d u_{[0,t]} \mid y_{[0,t]}) \prod_{k=0}^{t} O(d y_k \mid x_k).
\]
By \eqref{nu_factor}, $\nu$ can be written using $\bar{\boldsymbol{\gamma}}$:
\begin{align*}
   & \Ll^{\boldsymbol{\gamma}}(u_{[0,t]} \mid x_{[0,t]})\\
    &= \int \prod_{k=0}^{t} \bar{\gamma}_k(d u_k \mid y_{[0,k]}, u_{[0,k-1]}) \prod_{k=0}^{t} O(d y_k \mid x_k).
\end{align*}
This is  the conditional law under the  profile $\bar{\boldsymbol{\gamma}}$, and thus conclude the proof.
\qed
\end{proof}

\subsection{Measure Valued Formulation and Dynamic Programming}\label{dpp_section}
The problem can be reformulated using the joint distributions of the system's random variables. At time $t$, we define the measure-valued state
\begin{align*}
P_t := \mathds{P}(dx_{[0,t]}, dy_{[0,t]}, du_{[0,t-1]}) \in\mathcal{P}(\mathds{X}^{t+1} \times \mathds{Y}^{t+1} \times \mathds{U}^{t})
\end{align*}
as the joint distribution of the hidden states, observations, and past actions up to time $t$. The control action is represented by a measure
\begin{align*}
\Theta_t := \mathds{P}(dy_{[0,t]}, du_{[0,t]}) \in \mathcal{P}(\mathds{Y}^{t+1} \times \mathds{U}^{t+1}),
\end{align*}
corresponding to the joint distribution of observations and actions, subject to the consistency constraint
\begin{align*}
\Theta_t(dy_{[0,t]}, du_{[0,t-1]} \in A) = P_t(dy_{[0,t]}, du_{[0,t-1]} \in A)
\end{align*}
for all $A \in \mathcal{B}(\mathds{Y}^{t+1} \times \mathds{U}^{t})$. That is, $\Theta_t$ and $P_t$ agree on the marginal distribution of the information variables $(y_{[0,t]}, u_{[0,t-1]})$ available to the agent at time $t$.

By disintegrating $\Theta_t$, we recover the agent's control kernel $\gamma_t \in \Gamma_t$:
\begin{align}\label{disint}
\Theta_t(dy_{[0,t]}, du_{[0,t]}) = \gamma_t(du_t | h_t) \, P_t(dy_{[0,t]}, du_{[0,t-1]}),
\end{align}
where, as before, $h_t = (y_t, \dots, y_0, u_{t-1}, \dots, u_0)$.

We note that the conditional law $\mathcal{L}^\gamma(u_t \mid x_{[0,t]})$, introduced in Section~\ref{limit_problem_2}, can be expressed directly in terms of $P_t$ and $\gamma_t$:
\begin{align}\label{lambda}
\mathcal{L}^\gamma(u_t \mid x_{[0,t]})(\cdot) &= \int \gamma_t(\cdot \mid h_t)\, P_t(dy_{[0,t]}, du_{[0,t-1]} \mid x_{[0,t]})\nonumber\\
&=:\Lambda(P_t, \gamma_t)(\cdot | x_{[0,t]}) 
\end{align}
That is, the conditional distribution of $u_t$ given the state path is obtained by averaging the agent's policy $\gamma_t$ over the observations and past actions using the conditional distribution encoded in $P_t$.

The deterministic dynamics for the measure valued state are then given by
\begin{align}\label{inf_kernel}
P_{t+1} &= F(P_t, \gamma_t) \nonumber\\ 
&:= O(dy_{t+1} | x_{t+1}) \, \mathcal{T}(dx_{t+1} | x_t, \Lambda(P_t, \gamma_t)(\cdot | x_{[0,t]}) ) \nonumber\\
&\quad \times \gamma_t(du_t | h_t) \, P_t(dx_{[0,t]}, dy_{[0,t]}, du_{[0,t-1]}).
\end{align}
The controls up to time $t-1$, namely $\gamma_0, \dots, \gamma_{t-1}$, are implicitly embedded in $P_t$ and can be recovered through disintegration as in \eqref{disint}, while $\gamma_t$ enters explicitly through the control measure $\Theta_t$.


The measure-valued formulation above admits a dynamic programming recursion. At $t = T-1$, for any given $\gamma_0, \dots, \gamma_{T-2} \in \prod_{t=0}^{T-2} \Gamma_t$ (which are encoded in $P_{T-1}$), we define
\begin{align*}
&V^*_{T-1}(P_{T-1}) \\
&:= \inf_{\gamma_{T-1} \in \Gamma_{T-1}}\mathds{E}_{P_{T-1}} \bigg[ c\big(x_{T-1}, \Lambda(P_{T-1},\gamma_{T-1})(\cdot | x_{[0,T-1]})\big) \bigg],
\end{align*}
where the expectation is over $x_{[0,T-1]}$ with distribution determined by the marginal of $P_{T-1}$.

For a general time step $t = T-2, \dots, 0$, the value function is defined by
\begin{align}\label{dp_recursion}
&V^*_t(P_t)\nonumber\\
& := \inf_{\gamma_t \in \Gamma_t} \bigg\{\mathds{E}_{P_t} \bigg[ c\big(x_t, \Lambda(P_t, \gamma_t)(\cdot |x_{[0,t]})\big) \bigg] \nonumber\\
&\qquad\qquad\qquad\qquad\qquad + V^*_{t+1}\big(F(P_t, \gamma_t)\big) \bigg\},
\end{align}
where $F(P_t, \gamma_t)$ is the dynamics defined in \eqref{inf_kernel}.
The optimal value of the problem is then
\begin{align*}
V^*_0(\bar{P}_0),
\end{align*}
where $\bar{P}_0(dx_0, dy_0) := O(dy_0 | x_0) \, P_0(dx_0)$.

\begin{remark}
The dynamic program presented above is not directly tractable, as it involves optimization over infinite-dimensional policy spaces. Nonetheless, the formulation provides a significant simplification compared to the finite-agent regime. In the finite population setting, symmetric policies are not necessarily optimal, and thus one must define the dynamic programming recursion over the joint policy spaces of all agents. Whereas the infinite population formulation reduces the problem to a search over a single policy space at each time stage of the recursion. Further simplifications may be possible by  considering parametrized family of policies to reduce the dimension of the search space; we leave this direction for future work.
\end{remark}

\section{Convergence of Value Functions}\label{conv_section}

\begin{assumption}\label{main_assmp}
\begin{itemize}
\item[i.]  $\mathds{U}\subset \mathds{R}$ is compact.
\item[ii.] $\mathcal{T}(dx_1|x,\mu_{\bf u})$ is Lipschitz in $x,\mu_{\bf u}$ under $W_1$ such that
\begin{align*}
&W_1\left(\mathcal{T}(dx_1|x,\mu_{\bf u})-\mathcal{T}(\cdot|x',\mu_{\bf u'}\right)|\\
&\leq K_f \left(\|x-x'\|+W_1(\mu_{\bf u},\mu_{\bf u'})\right)
\end{align*} 
for some $K_f<\infty$, where $W_1$ is the first order Wasserstein distance.
\item[iii.] $O(dy|x)$ is Lipschitz in $x$ under total variation such that 
\begin{align*}
\|O(dy|x)-O(dy|x')\|_{TV}\leq K_O \|x-x'\|
\end{align*}
for some $K_O<\infty$, for all $x,x'$.
\item[iii] $c$ is  Lipschitz in $x,\mu_{\bf u}$ such that
\begin{align*}
|c(x,\mu_{{\bf u}})-c(x',\mu_{{\bf u'}})|\leq K_c \left(|x-x'|+W_1(\mu_{\bf u},\mu_{\bf u'})\right)
\end{align*}
for some $K_c<\infty$.
\end{itemize}
\end{assumption}

\begin{theorem}\label{main_thm}
Under Assumption \ref{main_assmp}, the following folds:
For any $P_0\in\P(\mathds{X})$, we have that 
\begin{align*}
\left|J^{N,*}(P_0) - J^{\infty,*}(P_0)\right| \leq \sum_{t=0}^{T-1}\frac{M_t}{\sqrt{N}}
\end{align*}
where 
\begin{align*}
&M_t:=\\
& K_c  \frac{1-K_f}{(1-K_f)^2 + K_O K_f}\left[\left(\frac{1-K_f + K_O K_f}{1-K_f}\right)^{t} - K_f^{t}\right].
\end{align*}
\end{theorem}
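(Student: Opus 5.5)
The plan is to prove the two one-sided inequalities separately. Both rest on one coupling estimate: if the $N$ agents in the finite population use a common symmetric policy $\gamma$, then the empirical action measure $\mu_{\bf U_t}$ stays close to the mean-field law $\mathcal{L}^\gamma(u_t\mid x_{[0,t]})$.

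\textbf{Coupling step.} Run the $N$-agent system and the limit system of Section \ref{limit_problem_2} on one probability space, with the same initial state $X_0=\bar X_0$ and the same common noise $w_t$. For the first step, note that given the hidden state path $x_{[0,t]}$ of the finite system, the agents' observations $y^i$ are conditionally i.i.d. Under the symmetric policy the actions $u^i_t$ are then conditionally i.i.d. with law $\mathcal{L}^\gamma(u_t\mid x_{[0,t]})$. Since $\mathds{U}\subset\mathds{R}$ is compact, the standard one-dimensional rate for empirical measures gives
\begin{align*}
E\left[W_1\left(\mu_{\bf U_t},\mathcal{L}^\gamma(u_t\mid X_{[0,t]})\right)\right]\leq \frac{C}{\sqrt N}.
\end{align*}
For the second step, compare $\mathcal{L}^\gamma(\cdot\mid X_{[0,t]})$ with $\mathcal{L}^\gamma(\cdot\mid \bar X_{[0,t]})$ along the two coupled paths. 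By Assumption \ref{main_assmp}(iii), the observation kernels differ by at most $K_O\|x_k-\bar x_k\|$ in total variation. Because $\mathds{U}$ is compact, one can normalize so that $W_1\le \|\cdot\|_{TV}$ (up to $\diam \mathds{U}$). The law of the action then differs by at most $K_O\sum_{k\le t}\|x_k-\bar x_k\|$.

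\textbf{Recursion step.} Let $e_t:=E\|X_t-\bar X_t\|$, using the optimal $W_1$ coupling of the transition kernels at each step. Assumption \ref{main_assmp}(ii) gives
\begin{align*}
e_{t+1}\le K_f e_t + K_f\left(\frac{1}{\sqrt N}+K_O\sum_{k\le t} e_k\right).
\end{align*}
This is a two-term linear recursion in $(e_t, \sum_{k\le t}e_k)$, with $e_0=0$. Diagonalizing it should produce exactly the closed form inside $M_t$: the powers $\left(\frac{1-K_f+K_OK_f}{1-K_f}\right)^t$ and $K_f^t$, the prefactor $\frac{1-K_f}{(1-K_f)^2+K_OK_f}$, and the overall scale $1/\sqrt N$. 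By Assumption \ref{main_assmp}(iv), the stage cost then differs by at most $K_c\left(e_t+E W_1(\mu_{\bf U_t},\mathcal{L}^\gamma(u_t\mid \bar X_{[0,t]}))\right)\le M_t/\sqrt N$. Summing over $t$ gives $|J^N(P_0,\gamma)-J^\infty(P_0,\gamma)|\le\sum_t M_t/\sqrt N$ for every symmetric pure $\gamma$.

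\textbf{Direction $J^{N,*}\le J^{\infty,*}+\sum_t M_t/\sqrt N$.} By Lemma \ref{thm:main}, $J^{\infty,*}$ can be approached by pure policies $\gamma$. Apply each such $\gamma$ symmetrically to all $N$ agents and use the bound above.

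\textbf{Reverse direction.} This is the main obstacle, because the optimal finite-population policies need not be symmetric. I would take an arbitrary team policy $(\gamma^1,\dots,\gamma^N)$ and form the mixture $\Pi=\frac1N\sum_i\delta_{\gamma^i}$. Conditional on $x_{[0,t]}$, the actions $u^i_t$ are independent but not identically distributed, with laws $\mathcal{L}^{\gamma^i}(\cdot\mid x_{[0,t]})$. The empirical measure therefore concentrates around their average, which is exactly $\mathcal{L}^\Pi(u_t\mid x_{[0,t]})$. I would first verify that the one-dimensional $W_1$ rate of order $1/\sqrt N$ still holds for independent non-identical samples; it does, via the CDF representation $W_1=\int|F_N-F|$ and a variance bound on each point. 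The same recursion then applies with $\Pi$ in place of $\gamma$, giving $J^\infty(P_0,\Pi)\le J^N(P_0,\gamma)+\sum_tM_t/\sqrt N$. Taking infima and using Lemma \ref{thm:main} to remove the mixture finishes the proof.

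The delicate points are that the Lipschitz constant of $x_{[0,t]}\mapsto\mathcal{L}^\gamma$ must be uniform over all policies, since it only uses $\|O\|_{TV}$, and that the recursion constants must match the displayed $M_t$. If they do not match exactly, I would adjust $M_t$ by the normalization constant for $\diam\mathds{U}$.
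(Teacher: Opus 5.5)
Your architecture matches the paper's: two one-sided bounds, the mixture $\Pi=\frac1N\sum_i\delta_{\gamma^i}$ for the hard direction, optimal $W_1$ couplings of the state kernels, and TV-Lipschitz dependence of the open-loop action law on the path. You also get the same recursion, $e_{t+1}\le K_fe_t+K_fm_t$ and $m_t\le\beta+K_O\sum_{k\le t}e_k$. Here $m_t:=E\,W_1(\mu_{\mathbf{u}_t},\mathcal{L}(u_t\mid x^\infty_{[0,t]}))$ and $\beta$ is the $O(1/\sqrt N)$ concentration term. Your CDF estimate $E\int|F_N-F|\le D/(2\sqrt N)$, with $D$ the diameter of $\mathds{U}$, is a cleaner substitute for the Dudley lemma: it gives an explicit constant and handles independent, non-identically distributed samples directly.

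\textbf{First gap: conditional independence.} The concentration step fails as you justify it, and the paper's proof asserts the same thing. For $t\ge1$, given the finite-population path $x_{[0,t]}$, the $u^i_t$ are \emph{not} conditionally i.i.d.\ with law $\mathcal{L}^\gamma(u_t\mid x_{[0,t]})$. Since $x_{k+1}=f(x_k,\mu_{\mathbf{u}_k},w_k)$, the conditional law of the histories carries the factor $\prod_{k<t}\mathcal{T}(dx_{k+1}\mid x_k,\mu_{\mathbf{u}_k})$. That factor couples the agents, and by Bayes it also moves each marginal away from the open-loop kernel. For example, take $y^i_0$ fair coins independent of $x$, $u^i_1=u^i_0=y^i_0$, and $x_1=\frac12\int u\,d\mu_{\mathbf{u}_0}$ plus noise smaller than $1/(4N)$. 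Then $\mu_{\mathbf{u}_1}$ is a function of $x_1$.

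The repair is a ghost system driven by the limit path:
\begin{itemize}
\item Draw $(x^N_{k+1},x^\infty_{k+1})$ from an optimal coupling (rather than common $w_t$) whose second marginal is $\mathcal{T}(\cdot\mid x^\infty_k,\mathcal{L}(u_k\mid x^\infty_{[0,k]}))$.
\item Let $\bar y^i_k\sim O(\cdot\mid x^\infty_k)$ be maximally TV-coupled with $y^i_k$, independently over $i$.
\item Let $\bar u^i_k$ share agent $i$'s randomization whenever $\bar h^i_k=h^i_k$.
\end{itemize}
Each ghost variable's conditional law given the entire past then depends only on ghost variables. Hence the $\bar h^i_t$ genuinely are conditionally independent given $x^\infty_{[0,t]}$, with the open-loop laws, and your CDF bound applies to $\mu_{\bar{\mathbf{u}}_t}$. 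Moreover $E\,W_1(\mu_{\mathbf{u}_t},\mu_{\bar{\mathbf{u}}_t})\le\frac DN\sum_iP(h^i_t\ne\bar h^i_t)\le DK_O\sum_{k\le t}e_k$. This recovers your recursion, up to the factor $D$ you already flagged.

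\textbf{Second gap: the constant.} No diagonalization or rescaling can produce the displayed $M_t$. The reason is that $M_0=0$, while your $t=0$ term $K_c\,E\,W_1(\mu_{\mathbf{u}_0},\mathcal{L}(u_0\mid x_0))$ is generally positive. In fact the statement is false for $T=1$. Take $\mathds{U}=[0,1]$ and $c(x,\mu)=W_1(\mu,\mathrm{Leb}_{[0,1]})$. Then $J^{\infty,*}=0$, but every $N$-atom empirical measure is at $W_1$-distance at least $1/(4N)$ from Lebesgue measure, so $J^{N,*}\ge 1/(4N)$. The paper reaches $M_t$ only through the incorrect claim $m_0=0$.

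Note also that the exact eigenvalues of your recursion solve $\lambda^2-(1+K_f+K_fK_O)\lambda+K_f=0$; they are not $r:=\frac{1-K_f+K_OK_f}{1-K_f}$ and $K_f$. Here is what the argument actually gives for $K_f<1$. Solve for $e$ in terms of $m$, bound $1-K_f^{t-j}\le 1$, and apply discrete Gr\"onwall; this gives $m_t\le\beta r^t$ and $e_t\le K_f\beta\frac{r^t-K_f^t}{r-K_f}$. The stage-$t$ error is then at most $K_c\beta\frac{r^{t+1}-K_f^{t+1}}{r-K_f}=\beta M_{t+1}$. With $\beta=D/(2\sqrt N)$ and $K_O$ rescaled by $D$, the provable statement is $|J^{N,*}-J^{\infty,*}|\le\frac{D}{2\sqrt N}\sum_{t=1}^{T}M_t$. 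Relative to the displayed bound, this is an index shift plus the missing concentration constant, and it is the bound you should aim to prove.
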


\begin{proof}
We first assume that $J^{N,*}(P_0) <J^{\infty,*}(P_0)$. We then define the following mixture policy 
\begin{align*}
\mathcal{L}^{\Pi}(u_t \mid x_{[0,t]})(du_t) =\int \prod_{k=0}^t\gamma_k(du_k|h_k) O(dy_k|x_k)\Pi(d\gamma)
\end{align*} 
where $\Pi(\cdot)$ is the mixture of the finite population $\epsilon-$optimal policies such that 
\begin{align*}
\Pi(\cdot) =\frac{1}{N} \sum_{i=1}^N \delta_{\gamma^i}(\cdot)
\end{align*}
for all $t$ where $\gamma_t^i$ denote the $\epsilon-$optimal control function of agent $i$ for the finite agent problem. For the infinite population problem, we use this mixture policy to get a further upper bound on the value difference. That is, we use the mixture policy $\Pi$ for the infinite population problem based on the  $\epsilon$-optimal policies $\{\gamma_t^i\}_i$, and we use the  $\epsilon$-optimal policies $\{\gamma_t^i\}_i$ for the finite population problem. Denoting these finite population team policy by ${\bf \gamma}$, under the initial assumption that $J^{N,*}(P_0) <J^{\infty,*}(P_0)$ we then have 
\begin{align*}
J^{\infty,*}(P_0)- J^{N,*}(P_0) < J^{\infty}(P_0,\Pi)- J^{N}(P_0,{\bf \gamma})+\epsilon
\end{align*}

 We let $x_t^N$ denote the hidden state variable under the finite population dynamics driven by the finite population $\epsilon$-optimal policies $\{\gamma_t^i\}_i$ such that
\begin{align*}
x^N_{t+1}=f(x_t^N,\mu^N_{u_t},w_t)
\end{align*}
where $\mu_{u_t}^N=\frac{1}{N}\sum_{i=1}^N \delta_{u_t^i}$ such that  $u_t^i\sim \gamma_t^i(\cdot\mid h_t^i)$. 

Furthermore, we let $x_t^\infty$ denote the hidden state variable driven under the mixture policy $\Pi$ under the infinite population dynamics such that 
 \begin{align*}
x^\infty_{t+1}=f(x_t^\infty, \mathcal{L}^{\Pi}(u_t|x^\infty_{[0,t]}),w'_t).
\end{align*}

We couple the noise sequences $w_t$ and $w_t'$ in a way that reaches the $W_1$ distance between the corresponding transition kernels such that
\begin{align*}
&\mathds{E}\left[\|x_{k}^N - x_{k}^\infty\right] \\
&= \mathds{E}\bigg[W_1\big(\mathcal{T}(\cdot|x_{k-1}^N,\mu^N_{u_{k-1}}),\\
&\qquad\qquad\qquad \mathcal{T}(\cdot|x_{k-1}^\infty,\mathcal{L}^{\Pi}(u_{k-1}|x^\infty_{[0,k-1]}))\big) \bigg]
\end{align*}
for all $k$. In particular, under this particular coupling, using the continuity of the transition kernel for the hidden state we have that 
\begin{align}\label{x_iter}
&\mathds{E}\left[\|x_k^N - x_k^\infty\|\right] \nonumber\\
&\leq K_f \mathds{E}\left[\|x_{k-1}^N - x_{k-1}^\infty\|\right]\nonumber\\
&\qquad  + K_f   \mathds{E}\left[W_1\left(\mu^N_{u_{k-1}},  \mathcal{L}^{\Pi}(u_{k-1}|x^\infty_{[0,k-1]})\right)\right].
\end{align}

Under this coupling of the noise sequences, we then write 
\begin{align*}
&\left|\mathds{E}\left[c(x_t^N,\mu^N_{u_t})\right] - \mathds{E}\left[c(x_t^\infty,  \mathcal{L}^{\Pi}(u_t|x^\infty_{[0,t]}))\right]\right|\\
&\leq \mathds{E}\left[\left|c(x_t^N,\mu^N_{u_t})-c(x_t^\infty,  \mathcal{L}^{\Pi}(u_t|x^\infty_{[0,t]}))\right|\right]\\
&\leq K_c\left(\mathds{E}[\|x_t^N - x_t^\infty\|] + \mathds{E}\left[W_1\left(\mu_{u_t}^N,  \mathcal{L}^{\Pi}(u_t|x^\infty_{[0,t]})\right)\right]\right)\\
&\leq  K_c\bigg(\mathds{E}[\|x_t^N - x_t^\infty\|] + \mathds{E}\left[W_1\left(\mu_{u_t}^N,  \mathcal{L}^{\Pi}(u_t|x^N_{[0,t]})\right)\right] \\
&\qquad \qquad +  \mathds{E}\left[W_1\left(\mathcal{L}^{\Pi}(u_t|x^N_{[0,t]}),  \mathcal{L}^{\Pi}(u_t|x^\infty_{[0,t]})\right)\right] \bigg).
\end{align*}
We first note that given $x^N_{[0,t]}$, the local histories, $h_t^i$, of the agents are independent. Hence, we can use Lemma \ref{dudley_lemma} to write
\begin{align*}
 \mathds{E}\left[W_1\left(\mu_{u_t}^N,  \mathcal{L}^{\Pi}(u_t|x^N_{[0,t]})\right)\right] \leq \frac{K}{\sqrt{N}}.
\end{align*}
Furthermore, using the total variation continuity of the observation channels, we can also write 
\begin{align*}
  &\mathds{E}\left[W_1\left(\mathcal{L}^{\Pi}(u_t|x^N_{[0,t]}),  \mathcal{L}^{\Pi}(u_t|x^\infty_{[0,t]})\right)\right]\\
  & \leq K_O \sum_{k=0}^t\mathds{E}\left[\|x_k^N - x_k^\infty\|\right].
\end{align*}

Denoting by $m_t:=\mathds{E}\left[W_1\left(\mu_{u_t}^N,  \mathcal{L}^{\Pi}(u_t|x^\infty_{[0,t]})\right)\right]$, one can derive 
\begin{align*}
m_t \leq \frac{K}{\sqrt{N}} + \frac{K_O K_f}{1-K_f} \sum_{j=0}^{t-1}(1-K_f^{t-j})m_j.
\end{align*}
Noting that $m_0=0$, one can solve for an upper-bound on $m_t$ such that
\begin{align*}
m_t \leq \frac{K}{\sqrt{N}}\left(\frac{1-K_f + K_OK_f}{1-K_f}\right)^{t-1}.
\end{align*}
Furthermore, using \eqref{x_iter}, we can derive
\begin{align*}
&\mathds{E}\left[\|x_k^N - x_k^\infty\|\right] \\
&\leq \frac{K}{\sqrt{N}} \cdot \frac{K_f(1-K_f)}{(1-K_f)^2 + K_O K_f} \\
&\qquad\qquad\qquad\times\left[\left(\frac{1-K_f + K_O K_f}{1-K_f}\right)^{t-1} - K_f^{t-1}\right].
\end{align*}
Combining the bounds, we can write 
\begin{align*}
&\left|\mathds{E}\left[c(x_t^N,\mu^N_{u_t})\right] - \mathds{E}\left[c(x_t^\infty,  \mathcal{L}^{\Pi}(u_t|x^\infty_{[0,t]}))\right]\right|  \\
&\leq K_c\frac{K}{\sqrt{N}} \cdot \frac{1-K_f}{(1-K_f)^2 + K_O K_f} \\
&\qquad\qquad\times \left[\left(\frac{1-K_f + K_O K_f}{1-K_f}\right)^{t} - K_f^{t}\right].
\end{align*}

For $J^{N,*}(P_0) <J^{\infty,*}(P_0)$, we let agent $i$ use $\gamma_t(du_t|h_t^i)$ at time $t$, where $(\gamma_t)_t$ is an $\epsilon-$optimal policy profile for the infinite population problem. We can then follow the identical steps as in the first case. The only difference is that we use Lemma \ref{dudley_lemma} for a fixed policy $\gamma$ that does not change with the agent identity.  Since, $\epsilon$ is arbitrary, letting $\epsilon\to 0$ concludes the proof.
\qed
\end{proof}

\begin{lemma}\label{dudley_lemma}
Given a fixed sequence of function $\gamma^1,\dots,\gamma^N$ where $\gamma^i:\mathds{S}\to \mathcal{P}(\mathds{U})$ for some space $\mathds{S}$ and where $\mathds{U}\subset \mathds{R}$ is compact. Let $D:=diam(\mathds{U})$, we have that 
\begin{align*}
&\mathds{E}\bigg[\sup_{\|h\|_{Lip}\leq 1} \bigg| \frac{1}{N}\sum_{i=1}^N \int h(u)\gamma^i(du|X^i) \\
&\qquad\qquad- \frac{1}{N}\sum_{i=1}^N \int h(u)\gamma^i(du|x)\mu(dx)\bigg|  \bigg] \leq C \frac{D}{\sqrt{N}}
\end{align*} 
for some constant $c<\infty$, for i.i.d. $X^i\sim \mu$. 
\end{lemma}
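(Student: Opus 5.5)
The plan is to reduce the supremum over $1$-Lipschitz test functions on the compact set $\mathds{U}\subset\mathds{R}$ to a one-dimensional Wasserstein bound between two probability measures on $\mathds{U}$, and then control that bound through the cumulative distribution function representation of $W_1$ on the line.

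Define the random measure $\hat\nu_N(du):=\frac1N\sum_{i=1}^N\gamma^i(du|X^i)$ and the deterministic measure $\bar\nu_N(du):=\frac1N\sum_{i=1}^N\int\gamma^i(du|x)\mu(dx)$. Both lie in $\P(\mathds{U})$. By Kantorovich--Rubinstein duality, the quantity inside the expectation is exactly $W_1(\hat\nu_N,\bar\nu_N)$. Since $\mathds{U}\subset[a,a+D]$ for some $a$, we have the identity
\begin{align*}
W_1(\hat\nu_N,\bar\nu_N)=\int_a^{a+D}\big|\hat F_N(s)-\bar F_N(s)\big|\,ds,
\end{align*}
where $\hat F_N(s)=\frac1N\sum_i\gamma^i((-\infty,s]\,|X^i)$ and $\bar F_N(s)=\mathds{E}[\hat F_N(s)]$.

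For each fixed $s$, $\hat F_N(s)-\bar F_N(s)$ is an average of $N$ independent, centered random variables $Z_i(s)=\gamma^i((-\infty,s]|X^i)-\mathds{E}[\gamma^i((-\infty,s]|X^i)]$ taking values in $[-1,1]$. Independence of the $Z_i(s)$ holds because the $X^i$ are i.i.d. and the $\gamma^i$ are fixed and deterministic. Hence $\mathds{E}|\hat F_N(s)-\bar F_N(s)|\le \big(\mathrm{Var}\big)^{1/2}\le \frac{1}{\sqrt N}$. We then apply Fubini (measurability in $(s,\omega)$ holds since CDFs are right-continuous, so we may restrict to rational $s$) to obtain
\begin{align*}
\mathds{E}\big[W_1(\hat\nu_N,\bar\nu_N)\big]\le\int_a^{a+D}\mathds{E}|\hat F_N(s)-\bar F_N(s)|\,ds\le\frac{D}{\sqrt N},
\end{align*}
which gives the claim with $C=1$.

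I expect no serious obstacle here; the step needing most care is justifying the reduction. We must check that the sup over Lipschitz $h$ of the difference of integrals equals $W_1$ between the two mixtures. Here it matters that $\hat\nu_N$ and $\bar\nu_N$ are genuine probability measures, using the measurability of $x\mapsto\gamma^i(\cdot|x)$ for $\bar\nu_N$. We must also handle the constant shift in $h$, which cancels because both measures have unit mass. An alternative route, if one wanted to avoid the one-dimensional CDF formula, is a chaining (Dudley entropy) argument over the class of $1$-Lipschitz functions on $\mathds{U}$ vanishing at a point. In one dimension that class has entropy $\log N(\epsilon)\lesssim D/\epsilon$, whose entropy integral converges, again giving a rate $CD/\sqrt N$ but with a less explicit constant.
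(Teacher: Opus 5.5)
Your proof is correct, and it takes a genuinely different route from the paper's. The paper never passes through $W_1$ explicitly. It restricts to the class $\cF$ of $1$-Lipschitz functions vanishing at a fixed $u_0\in\mathds{U}$, and uses Hoeffding's lemma to show that the centered process $Z_h$ is sub-Gaussian for the metric $\|h-h'\|_\infty/\sqrt{N}$. It then applies Dudley's entropy integral with the metric-entropy bound $\log\cN(\cF,\epsilon,\|\cdot\|_\infty)\le KD/\epsilon$ for Lipschitz classes. The integral converges because the entropy grows only like $\epsilon^{-1}$, and one gets $CD/\sqrt{N}$ with constants inherited from Dudley's theorem and the covering bound. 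This is precisely the alternative you sketch at the end. Your argument instead uses the fact that on the line, Kantorovich--Rubinstein duality together with $W_1=\int|\hat F_N-\bar F_N|\,ds$ reduces the supremum over a whole function class to an integral over a one-parameter family of half-line indicators. Each of these needs only a second-moment bound, and Tonelli finishes the argument. This buys an entirely elementary proof with an explicit constant: $C=1$, and in fact $C=1/2$, since each $\gamma^i((-\infty,s]\,|\,X^i)$ lies in $[0,1]$ and so has variance at most $1/4$. It also settles the measurability of the supremum, which the chaining argument handles only implicitly through separability of $\cF$. The price is that the CDF representation of $W_1$ is special to dimension one. The paper's chaining framework is what can be extended, via a truncated Dudley integral, to $\mathds{U}\subset\mathds{R}^d$ with the dimension-dependent rates mentioned in the remark after the lemma.
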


\begin{proof}

Since $ \frac{1}{N}\sum_i\gamma^i(\cdot|X^i)$ and $\frac{1}{N}\sum_i\int \gamma^i(\cdot|x)\mu(dx)$ are both probability measures, the constant part of $h$ cancels out. Hence, we can restrict the functions to the following set
\[
\mathcal{F} := \{h\colon \mathds{U}\to\mathds{R}:\|h\|_{\Lip}\le 1,\;h(u_0)=0\}
\]
for a fixed $u_0\in\mathds{U}$, so that $\|h\|_\infty \le D$ for all $h\in\cF$ where $D:= diam(\mathds{U})$.
 
Define the centered process
\begin{align*}
&Z_h := \frac{1}{N}\sum_{i=1}^N \bigl(g_i^h(X^i) - \mathds{E}[g_i^h(X^i)]\bigr),\\
& \text{where }g_i^h(x):=\int_{\mathds{U}} h(u)\,\gamma^i(du\,|\,x).
\end{align*}
We must bound $\mathds{E}[\sup_{h\in\cF}|Z_h|]$. We use  Dudley's theorem for sub-Gaussian processes (e.g. \cite[Theorem~8.1.3]{Vershynin_2026}):

For any $h,h'\in\cF$ and $x\in\mathds{S}$, we have that
$|g_i^h(x)-g_i^{h'}(x)|\le \|h-h'\|_\infty$.  By Hoeffding's lemma and independence, we have that 
\[
\mathds{E}\bigl[e^{\lambda(Z_h - Z_{h'})}\bigr] \le \exp\!\left(\frac{\lambda^2\|h-h'\|_\infty^2}{2N}\right).
\]
Hence $(Z_h)_{h\in\cF}$ is a sub-Gaussian process with respect to the metric
\begin{equation}\label{eq:sigma}
\sigma(h,h') := \frac{\|h-h'\|_\infty}{\sqrt{N}}.
\end{equation}
By Dudley's theorem for sub-Gaussian processes (e.g.\ \cite[Theorem~8.1.3]{Vershynin_2026}):
\begin{equation}\label{eq:dudley}
\mathds{E}\!\left[\sup_{h\in\cF}|Z_h|\right]
\le C\int_0^{\diam(\cF,\,\sigma)}\sqrt{\log\cN(\cF,\,\epsilon,\, \sigma)}\;d\epsilon,
\end{equation}
Note that using the Lipschitz bound of the functions we have that  $\diam(\cF,\sigma) \le 2D/\sqrt{N}$ where $D=diam(\mathds{U})$.  Furthermore, we also have that  $\cN(\cF,\epsilon,\sigma) = \cN(\cF,\epsilon\sqrt{N},\|\cdot\|_\infty)$.

By \cite[Theorem 2.7.1]{vanderVaart1996}, for the function class $\cF$ we have that,
\begin{equation*}
\log\cN(\cF,\,\epsilon,\,\|\cdot\|_\infty) \;\le\; K\left(\frac{D}{\epsilon}\right)
\qquad\text{for all } 0<\epsilon\le D,
\end{equation*}
where $K > 0$ is a constant. Using this on (\ref{eq:dudley}), we get
\begin{align*}
&\mathds{E}\!\left[\sup_{h\in\cF}|Z_h|\right]
\le C\int_0^{2D/\sqrt{N}}\sqrt{K\!\left(\frac{D}{\epsilon\sqrt{N}}\right)}\;d\epsilon\\
&= \frac{C\sqrt{K D}}{N^{1/4}}\int_0^{2D/\sqrt{N}}\epsilon^{-1/2}\,d\epsilon = \frac{2\sqrt{K}D}{\sqrt{N}}.
\end{align*}
\qed
\end{proof}

\begin{remark}
The result is stated for one dimensional action spaces. However, using the truncated version of the Dudley's theorem for sub-Gaussian processes, one can derive bounds for $\mathds{U}\subset \mathds{R}^d$ which behaves as $C_2 D\frac{\log\sqrt{1+N}}{N}$ for $d=2$, and as $C_3 D N^{-1/(d+3)}$ for $d=3$ using the same proof arguments. 
\end{remark}

 \begin{cor}[Near Optimality of Symmetric Policies]
Fix $\epsilon>0$. Let
\[
    {\boldsymbol{\gamma}} = ({\gamma}_0, {\gamma}_1, \ldots, {\gamma}_{T-1}) \in \prod_{t=0}^{T-1}\Gamma_t
\]
denote an $\epsilon$-optimal policy sequence for the infinite population problem. We let $J^{N}(P_0,{\boldsymbol{\gamma}})$ denote the accumulated cost if every agent follows the symmetric policy ${\boldsymbol{\gamma}} $ using their local observation histories.
Under Assumption \ref{main_assmp}, the following folds:
For any $P_0\in\P(\mathds{X})$, we have that 
\begin{align*}
J^{N}(P_0, {\boldsymbol{\gamma}} ) - J^{N,*}(P_0) \leq 2\sum_{t=0}^{T-1}\frac{M_t}{\sqrt{N}} + \epsilon
\end{align*}
where 
\begin{align*}
&M_t:= K_c \cdot \frac{1-K_f}{(1-K_f)^2 + K_O K_f} \\
&\qquad\qquad\times \left[\left(\frac{1-K_f + K_O K_f}{1-K_f}\right)^{t} - K_f^{t}\right].
\end{align*}
\end{cor}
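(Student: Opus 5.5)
The plan is to split the suboptimality gap through the infinite population value:
\begin{align*}
J^{N}(P_0,\boldsymbol{\gamma}) - J^{N,*}(P_0) &= \big(J^{N}(P_0,\boldsymbol{\gamma}) - J^{\infty}(P_0,\boldsymbol{\gamma})\big) + \big(J^{\infty}(P_0,\boldsymbol{\gamma}) - J^{\infty,*}(P_0)\big)\\
&\quad + \big(J^{\infty,*}(P_0) - J^{N,*}(P_0)\big).
\end{align*}
Each term is handled separately, as follows.

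\textbf{Middle term.} Since $\boldsymbol{\gamma}$ is $\epsilon$-optimal for the infinite population problem, this term is at most $\epsilon$. Here $J^\infty(P_0,\boldsymbol{\gamma})$ is understood as the cost under the degenerate mixture $\delta_{\boldsymbol{\gamma}}$. By Lemma \ref{thm:main}, the infimum over mixtures coincides with the infimum over pure sequences, so there is no ambiguity in what ``$\epsilon$-optimal'' means.

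\textbf{Last term.} This is bounded directly by Theorem \ref{main_thm}, giving at most $\sum_{t=0}^{T-1} M_t/\sqrt{N}$.

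\textbf{First term.} We bound $|J^{N}(P_0,\boldsymbol{\gamma}) - J^{\infty}(P_0,\boldsymbol{\gamma})| \le \sum_{t=0}^{T-1} M_t/\sqrt{N}$ by rerunning the coupling argument of the second case of the proof of Theorem \ref{main_thm}, but for the fixed symmetric policy instead of an $\epsilon$-optimal one. The steps are:
\begin{itemize}
\item Let $x^N_t$ be the finite population state when all agents use $\boldsymbol{\gamma}$, and $x^\infty_t$ the infinite population state under $\boldsymbol{\gamma}$.
\item Couple the noises $w_t,w_t'$ so that they attain the $W_1$ distance between the transition kernels. This yields the recursion \eqref{x_iter}, with $\mathcal{L}^{\Pi}$ replaced by $\mathcal{L}^{\boldsymbol{\gamma}}$.
\item Split the cost difference at time $t$ using the Lipschitz property of $c$ and the triangle inequality:
\begin{align*}
&K_c\Big(\mathds{E}\|x^N_t-x^\infty_t\| + \mathds{E}\, W_1\big(\mu^N_{u_t},\mathcal{L}^{\boldsymbol{\gamma}}(u_t|x^N_{[0,t]})\big)\\
&\qquad + \mathds{E}\, W_1\big(\mathcal{L}^{\boldsymbol{\gamma}}(u_t|x^N_{[0,t]}),\mathcal{L}^{\boldsymbol{\gamma}}(u_t|x^\infty_{[0,t]})\big)\Big).
\end{align*}
\item For the middle piece, use that conditionally on $x^N_{[0,t]}$ the local histories $h^i_t$ are i.i.d. Apply Lemma \ref{dudley_lemma} with $\gamma^i=\gamma_t$ for all $i$ and $\mathds{S}$ the history space. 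The lemma concerns the averaged kernels, so the actual sampled actions add a further Dudley-type fluctuation of the same $1/\sqrt N$ order, which can be absorbed into $K$.
\item For the last piece, use the total variation Lipschitz property of $O$ to bound it by $K_O\sum_{k\le t}\mathds{E}\|x^N_k-x^\infty_k\|$.
\item Solve the same linear recursion for $m_t$ and $\mathds{E}\|x^N_t-x^\infty_t\|$ as in the theorem. This gives exactly the constant $M_t$, with the constant $K$ from Lemma \ref{dudley_lemma} absorbed in the same way as in the theorem's statement.
\end{itemize}

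\textbf{Conclusion.} Summing the three bounds gives $2\sum_t M_t/\sqrt N+\epsilon$.

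\textbf{Main obstacle.} The delicate step is the first term. There, one must make sure that the conditional i.i.d. structure of the $h_t^i$ given the common state path holds under the symmetric policy. This requires that the observation noises are independent across agents and that each agent's past actions depend only on its own history.

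A second concern is whether the theorem's proof is really bounding the policy-wise difference and not just the difference of optimal values. In the symmetric case it is, since that argument never uses optimality of $\boldsymbol{\gamma}$; only the final inequality $J^{\infty,*}\le J^\infty(P_0,\boldsymbol{\gamma})$ needs it. So the first approach is to cite that second-case computation verbatim as a policy-wise estimate.
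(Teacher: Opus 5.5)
Your route is the paper's: a triangle inequality through the limit problem, with the first piece taken from the second case of the proof of Theorem \ref{main_thm}. Isolating $\epsilon$ as a separate middle term is cleaner than the paper's two-term version, which hides it inside ``the second part of the proof''.

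However, the first-term estimate you propose to rerun has a genuine gap, inherited from that proof, exactly at the step you flagged. Given $x^N_{[0,t]}$, the histories $h^i_t$ are in general neither conditionally independent nor distributed according to the product law $\prod_{k\le t}\gamma_k(du_k|h_k)O(dy_k|x^N_k)$ that defines $\Ll^{\boldsymbol{\gamma}}(\cdot|x^N_{[0,t]})$. The reason is that $x^N_{k+1}=f(x^N_k,\mu^N_{u_k},w_k)$ carries information about all agents' past actions. For example, take $f(x,\mu,w)=\tfrac12\int u\,\mu(du)+w$ with Gaussian $w$. The law of $(u^1_0,\dots,u^N_0)$ given $(x_0,x_1)$ is then the product law tilted by the likelihood of $x_1$, which is a function of $\frac1N\sum_i u^i_0$, so it is not a product. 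Independent observation noises and decentralized actions do not remove this feedback, so Lemma \ref{dudley_lemma} cannot be applied there.

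A repair is to compare along $x^\infty$ instead of $x^N$:
\begin{itemize}
\item Write $u^i_t=\phi_t(h^i_t,\xi^i_t)$ with i.i.d.\ uniform $\xi^i_t$.
\item Generate shadow histories $\tilde h^i_t$ along $x^\infty$, using the same $\xi^i_t$ and, at each $k$, a maximal coupling of $O(\cdot|x^N_k)$ and $O(\cdot|x^\infty_k)$ for $(y^i_k,\tilde y^i_k)$, independently over $i$.
\item Every variable of $(x^\infty,\tilde y,\tilde u)$ then has, given everything generated before it, a conditional law depending only on its own system's past. So this triple has the ideal law, and the $\tilde u^i_t$ are genuinely i.i.d.\ given $x^\infty_{[0,t]}$ with law $\Ll^{\boldsymbol{\gamma}}(\cdot|x^\infty_{[0,t]})$.
\item Lemma \ref{dudley_lemma} with Dirac kernels $\delta_{\phi_t(h,\xi)}$ bounds that piece; this also settles your concern about sampled actions.
\item The remaining piece satisfies $\mathds{E}\,W_1(\mu^N_{u_t},\mu^N_{\tilde u_t})\le\frac{D}{N}\sum_i\mathds{P}(h^i_t\neq\tilde h^i_t)\le DK_O\sum_{k\le t}\mathds{E}\|x^N_k-x^\infty_k\|$, which closes a recursion of the same type.
\end{itemize}

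Second, no correct recursion returns ``exactly $M_t$''. The initial term $m_0=\mathds{E}\,W_1(\mu^N_{u_0},\Ll^{\boldsymbol{\gamma}}(\cdot|x_0))$ is itself the Dudley term, not $0$. Also, $M_t$ contains neither the constant of Lemma \ref{dudley_lemma} nor the factor $D$ from passing from total variation to $W_1$. Since $M_0=0$, the statement as written is false for $T=1$:
\begin{itemize}
\item Take $\mathds{U}=[0,1]$, an uninformative channel, and $c(x,\mu)=W_1(\mu,\tfrac12\delta_0+\tfrac12\delta_1)$, so $K_c=1$.
\item The policy $\gamma_0\equiv\tfrac12\delta_0+\tfrac12\delta_1$ is optimal in the limit with cost $0$.
\item For $N=2$ this policy costs $\tfrac14$, whereas ``agent $1$ plays $0$, agent $2$ plays $1$'' costs $0$. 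So $J^2(P_0,\boldsymbol{\gamma})-J^{2,*}(P_0)=\tfrac14$, while the claimed bound is $2M_0/\sqrt2+\epsilon=\epsilon$.
\item For $N=3$ one gets $J^{3,*}(P_0)=\tfrac16\neq 0=J^{\infty,*}(P_0)$, so Theorem \ref{main_thm}, which you cite for the last term, fails in the same way.
\end{itemize}
Once repaired, your argument proves the bound with $M_t$ replaced by constants $\tilde M_t$ that include $CD$ and satisfy $\tilde M_0=K_cCD>0$.
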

\begin{proof}
We write 
\begin{align*}
&|J^{N}(P_0, {\boldsymbol{\gamma}} ) - J^{N,*}(P_0)|\\
&\leq  |J^{N}(P_0, {\boldsymbol{\gamma}} ) - J^{\infty,*}(P_0)|+ | J^{\infty,*}(P_0)- J^{N,*}(P_0)|
\end{align*}
The second term is bounded by Theorem \ref{main_thm} and the first term is bounded by the second part of the proof of the same theorem.
\end{proof}
 
 \section{Conclusion}
We studied a multi-agent mean-field control problem in which the agents share a common hidden state and control its dynamics through the empirical distribution of their actions. The problem is partially observed and decentralized; each agent observes the hidden state through its own symmetric observation channel. We showed that the infinite-population limit corresponds to a measure-valued deterministic control problem, and provided a dynamic programming recursion for this formulation. As the main contribution, we established that symmetric policies constructed for the infinite-population problem are near optimal for the finite-population problem, and derived a convergence rate for this approximation. As a future direction, we aim to develop computationally efficient methods for solving this problem by considering parametrized families of policies combined with policy gradient methods, and by developing finite-memory approximations, where policies depend only on a truncated history rather than the entire past, to reduce the dimensionality of the policy spaces. 
 
 
 
 
 

\bibliographystyle{plain}

\bibliography{AliBibliography,mfc_bibliography}

\end{document}